\theoremstyle{plain}
\newtheorem{thm}{Theorem}%[subsection]
\newtheorem{lemma}[thm]{Lemma}
\newtheorem{prop}[thm]{Proposition}
\newtheorem{conj}[thm]{Conjecture}
\newtheorem*{ques*}{Question}
\title{Kings in Multipartite Hypertournaments}
\author{
Jiangdong Ai\thanks{Department of Computer Science. Royal Holloway University of London.  {\tt Jiangdong.Ai.2018@live.rhul.ac.uk}.} \and Stefanie Gerke\thanks{Department of Mathematics. Royal Holloway University of London.  {\tt stefanie.gerke@rhul.ac.uk}.} \and Gregory Gutin \thanks{Department of Computer Science. Royal Holloway University of London. {\tt g.gutin@rhul.ac.uk}.} }
\begin{document}
 \maketitle

 \begin{abstract}
 In his paper ``Kings in Bipartite Hypertournaments'' (Graphs $\&$ Combinatorics 35, 2019), Petrovic stated two conjectures on 4-kings in multipartite hypertournaments. We prove one of these conjectures and give counterexamples for the other.
%and provide a sufficient condition for the conclusion of the conjecture to hold.
 \end{abstract}

\section{Introduction}\label{sec:intro}

Given two integers $n$ and $k$, $n\ge k>1$,
a $k$-{\em hypertournament} $T$ on $n$ vertices is a pair $(V,A)$,
where $V$ is a set of vertices, $|V|=n$ and $A$ is a set of $k$-tuples of
vertices, called arcs, so that for any $k$-subset $S$ of $V$,
$A$ contains exactly one of the $k!$ tuples whose entries belong to
$S$. For an arc $x_1x_2\ldots x_k$, we say that $x_i$ {\em precedes} $x_j$ if $i<j.$
A 2-hypertournament is merely an (ordinary) tournament. Hypertournaments have been studied in a large number of papers, see e.g. \cite{A,BB,B,BPT,F,GY,LLGS,PT,Y}.

Recently, Petrovic \cite{P} introduced multipartite hypertournaments in a similar way. Let $n$ and $k$ be integers such that $n> k\ge 2.$ Let $V$ be a set of $n$ vertices and $V=V_1\uplus V_2\uplus \dots \uplus V_p$ be a partition of $V$ into $p\ge 2$ non-empty subsets.
A {\em $p$-partite $k$-hypertournament} (or, {\em multipartite hypertournament}) $H$ can be obtained from a $k$-hypertournament $T$ on vertex set $V$ by deleting all arcs  $x_1x_2\ldots x_k$ such that $\{x_1,x_2,\ldots ,x_k\}\subseteq  V_i$ for some $i\in [p].$
We call $V_i$'s {\em partite sets} of $H.$ The set of arcs of $H=(V,A)$ will be denoted by $A(H),$ i.e., $A(H)=A.$
A $p$-partite 2-hypertournament is a {\em $p$-partite tournament}. %A  {\em semicomplete $p$-partite digraph} is a digraph obtained from a complete $p$-partite graph by replacing every edge $\{x,y\}$ with
%either the arc $xy$ or the arc $yx$ or the pair $xy,yx$ of arcs. Thus, when pairs of the form $xy,yx$ are not used, we obtain a $p$-partite tournament.

For $u\in V_i,w\in V_j$ with $i\ne j,$ $A_H(u,w)$ is the set of arcs of $H$ which contain $u$ and $w$ and where $u$ precedes $w.$ We will write $xey$ if $e\in A_H(x,y).$
We let $A_H(x,y) =\emptyset$  if either  $x$ and $y$ belong to the same partite set of $H.$
A {\em path} in $H$ is an alternating sequence $P=x_1a_1x_2a_2\dots x_{q-1}a_{q-1}x_q$ of distinct vertices $x_i$ and distinct arcs $a_j$ such that $x_ja_jx_{j+1}$ for every $j\in [q-1].$
We will call $P$ an $(x_1,x_q)$-{\em path} of {\em length} $q-1.$ %For vertices $x$ and $y$ and an arc $a\in A_H(x,y)$, we write $xay.$

Let $q\ge 1$ be a natural number. A vertex $x$ of $H$ is a $q$-{\em king} if for every $y\in V$, $H$ has  an $(x,y)$-path of length at most $q.$  Generalizing a well-known theorem of Landau that every tournament has a 2-king (see e.g. \cite{BJG}),
Brcanov et al. \cite{BPT} showed that every hypertournament has a 2-king.
A vertex $v$ of $H$ is a {\em transmitter} if for every vertex $u$ from a different partite set than $v,$ $A_H(u,v)=\emptyset.$

Note that for every $u\in V_i,w\in V_j$ $(i\neq j)$, we have
$|A_H(u,w)|+|A_H(w,u)|=\binom{n-2}{k-2}.$ A {\em majority multipartite tournament} $M(H)$ of $H$ has the same partite sets as $H$ and for every $u\in V_i$ and $w\in V_j$ with $i\ne j$, $uw\in M(H)$ if
$|A_H(u,w)|> \frac{1}{2}\binom{n-2}{k-2}.$ If $|A_H(u,w)| = \frac{1}{2}\binom{n-2}{k-2}$ then we can choose either $uw$ or $wu$ for $M(H).$ %The {\em majority semicomplete multipartite digraph} $M^*(H)$
%has the same partite sets as $H$ and for every $u\in V_i$ and $w\in V_j$ with $i\ne j$, $uw\in M^*(H)$ if
%$|A_H(u,w)|\ge \frac{1}{2}\binom{n-2}{k-2}.$ Clearly, $M(H)$ is obtained from $M^*(H)$ by deleting an arc in every cycle of length 2.

For a graph $G=(V,E)$ and $U\subseteq V$, let $N_G(U)=\{v\in V\setminus U:\ uv\in E, u\in U\}.$

Gutin \cite{G} and independently Petrovic and Thomassen \cite{PT} proved the following:
\begin{thm}\label{t4-k} \cite{G,PT}
Every multipartite tournament with at most one transmitter contains a 4-king.
\end{thm}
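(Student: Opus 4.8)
The plan is to split on whether $D$ has a transmitter, and in the transmitter-free case to locate the $4$-king inside the unique initial strong component of $D$.

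\emph{The transmitter case.} Suppose $D$ has a transmitter $v$; by hypothesis it is the only one. Then $v$ reaches, by a path of length $1$, every vertex outside its own partite set $V_i$. If $u \in V_i \setminus \{v\}$, then $u$ is not a transmitter, so $u$ has an in-neighbour $w$ in some $V_j$ with $j \neq i$; since $v$ is a transmitter, $v \to w$, so $v \to w \to u$ is a path of length $2$. Hence $v$ is a $2$-king, and in particular a $4$-king.

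\emph{Reduction when $D$ has no transmitter.} Assume now that $D$ has no transmitter; equivalently, every vertex has an in-neighbour. I would first show that $D$ has a unique \emph{initial} strong component $C$, i.e.\ a strong component into which no arc enters. Existence is immediate because the condensation of $D$ is a non-empty acyclic digraph. For uniqueness, if $C_1 \neq C_2$ were two initial strong components then all arcs between them would have to point the same way (otherwise $C_1$ and $C_2$ would lie in a common strong component), so, to avoid an arc entering $C_2$ and one entering $C_1$, the sets $C_1$ and $C_2$ would have to lie inside one partite set; but then $C_1$ and $C_2$ are single vertices receiving no arc at all, i.e.\ two transmitters, contradicting the hypothesis. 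Since no vertex outside $C$ can reach any vertex of $C$, every $4$-king of $D$ lies in $C$, so it suffices to exhibit a vertex of $C$ that reaches all of $D$ within four steps.

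\emph{Finding the king.} The natural candidate is a vertex $x$ of maximum out-degree, taken inside $C$ (one must check this causes no loss). Write $R_i$ for the set of vertices reachable from $x$ by a path of length at most $i$, and suppose, for contradiction, that $R_4 \neq V$. The main tool is a structural dichotomy got by comparing out-degrees: if $w \notin R_2$, then either (a) $w$ lies in the partite set of $x$ and has the same out-neighbourhood as $x$ (so $w$ is a ``twin'' of $x$, hence also of maximum out-degree), or (b) $w \to x$ and some out-neighbour of $x$ lies in the partite set of $w$. Now fix $y \notin R_4$ and build a sequence $y_0 = y, y_1, y_2, \dots$ with $y_{i+1} \to y_i$ (possible since every vertex has an in-neighbour). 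Since $y \notin R_4$, one gets $y_1 \notin R_3$, $y_2 \notin R_2$ and $y_3 \notin R_1$; applying the dichotomy to $y$, $y_1$, $y_2$ then pins down enough of the arc pattern around $x$ to contradict the maximality of the out-degree of $x$. (Alternative (a) usually collapses immediately: if, say, $y_2$ were a twin of $x$ with $y_2 \to y_1$, then $y_1$ would be an out-neighbour of $x$, hence in $R_1$, contradicting $y_1 \notin R_3$.)

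\emph{Where the difficulty lies.} The genuinely delicate step is the last one: turning the dichotomy together with the absence of a transmitter into an actual contradiction requires a careful case analysis, and the awkward cases are precisely those involving vertices in $x$'s own partite set. One also has to make sure, when $D$ is not strong, that the maximum-out-degree vertex may legitimately be chosen in $C$, so that it can reach the ``far downstream'' vertices of $D$ at all. This is where the hypothesis of at most one transmitter does its work.
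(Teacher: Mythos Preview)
The paper does not prove this theorem; it is quoted from \cite{G,PT} and used as a black box for the Majority Lemma machinery. So there is no ``paper's own proof'' to compare your attempt against.

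On the merits of the proposal itself: the transmitter case and the uniqueness of the initial strong component are fine (indeed the latter is cleaner than you make it: any arc between $C_1$ and $C_2$ already contradicts initiality of one of them, so $C_1\cup C_2$ lies in a single partite set and hence consists of transmitters). The dichotomy you state for $w\notin R_2$ is also essentially correct once $x$ has \emph{global} maximum out-degree, and your handling of alternative~(a) is right.

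The genuine gap is exactly where you say it is, and it is not merely a matter of bookkeeping. First, you never resolve the tension you flag: the dichotomy needs $x$ to have maximum out-degree in all of $D$, but a $4$-king must lie in $C$, and there is no a~priori reason a global max-out-degree vertex sits in $C$. Taking $x$ of maximum out-degree \emph{within} $C$ breaks the dichotomy for vertices outside $C$; taking it globally may put $x$ outside $C$, where it cannot reach $C$ at all. Second, even granting a suitable $x$, you stop short of the actual contradiction: applying the dichotomy to $y,y_1,y_2$ and tracking partite-set memberships is precisely the content of the theorem, and ``pins down enough of the arc pattern'' is not a proof. As written, this is a plan with the load-bearing case analysis missing.
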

Petrovic \cite{P} proved that the same result holds for bipartite $k$-hypertournaments:

\begin{thm}\label{bt4-k} \cite{P}
Every bipartite $k$-hypertournaments ($k\ge 2$) with at most one transmitter contains a 4-king.
\end{thm}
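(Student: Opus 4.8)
The plan is to pass to the \emph{majority bipartite tournament} $M(H)$ and split into two cases according to whether $M(H)$ (equivalently, whether $H$) has a transmitter. Two things need care. First, $M(H)$ may have \emph{more} transmitters than $H$: a vertex that dominates its opposite partite set only by a majority of the arcs in each pair is a transmitter of $M(H)$ although it may receive arcs in $H$, so Theorem~\ref{t4-k} cannot be applied to $M(H)$ blindly. Second, an $(x,y)$-path of length $\le 4$ in $M(H)$ gives only a \emph{walk} of length $\le 4$ in $H$ with distinct vertices but possibly repeated arcs, which has to be upgraded to an honest path. Throughout I use that $uv\in M(H)$ forces $|A_H(u,v)|\ge\frac{1}{2}\binom{n-2}{k-2}$, and in particular $|A_H(u,v)|\ge 1$, so every arc of $M(H)$ is witnessed by some arc of $H$.

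\medskip\noindent\textbf{Case 1: $M(H)$ has a transmitter.} Every transmitter of $H$ is a transmitter of $M(H)$, and $H$ has at most one transmitter, so I may fix a transmitter $z$ of $M(H)$ which equals the transmitter of $H$ whenever $H$ has one; say $z\in V_1$. Then in $H$ the vertex $z$ dominates all of $V_2$, i.e.\ for each $u\in V_2$ some arc $e$ satisfies $zeu$. Also every $v\in V_1\setminus\{z\}$ is not a transmitter of $H$ (if $H$ has a transmitter it is $z$), so some arc satisfies $uev$ for some $u\in V_2$; choosing an arc $e'$ with $ze'u$, and replacing $e'$ by another member of $A_H(z,u)$ should $e'=e$ (possible outside the small cases below), we obtain a $(z,v)$-path of length $\le 2$ in $H$. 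Hence $z$ reaches every vertex of $H$ within $2$ steps, and is in particular a $4$-king of $H$.

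\medskip\noindent\textbf{Case 2: $M(H)$ has no transmitter.} Then $H$ has none either, and $M(H)$, being a bipartite tournament with at most one transmitter, has a $4$-king $x$ by Theorem~\ref{t4-k}. Fix $y\in V$ and an $(x,y)$-path $x=z_0,z_1,\dots,z_\ell=y$ in $M(H)$ with $\ell\le 4$, and choose arcs $e_0,\dots,e_{\ell-1}$ of $H$ with $z_ie_iz_{i+1}$ greedily, taking each $e_i$ outside $\{e_0,\dots,e_{i-1}\}$. Since $|A_H(z_i,z_{i+1})|\ge\frac{1}{2}\binom{n-2}{k-2}>3$ when $\binom{n-2}{k-2}\ge 7$, at least four arcs are available at each step while at most $i\le 3$ are forbidden, so the choice succeeds and $z_0e_0z_1\cdots e_{\ell-1}z_\ell$ is an $(x,y)$-path of length $\le 4$ in $H$; thus $x$ is a $4$-king of $H$. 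The exceptional pairs $(n,k)$ with $2\le\binom{n-2}{k-2}\le 6$ are finite in number and all have $n\le 8$ (while $k=2$ is vacuous since then $M(H)=H$); they are handled by running the arguments of Cases~1 and~2 with more care --- two of the at most four arcs $e_i$ can coincide only when a single $k$-arc of $H$ contains at least three of the path-vertices, which sharply restricts the offending arc --- or by inspecting these bounded-size hypertournaments directly. The step I expect to be the real obstacle is exactly this lift: producing a \emph{path}, not merely a walk, in $H$ from a short path in $M(H)$; once the counting that supplies enough distinct witnessing arcs is set up, what remains is routine bookkeeping.
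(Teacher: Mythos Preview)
The paper does not prove Theorem~\ref{bt4-k}; it is quoted from Petrovic~\cite{P} and used as an input. So there is no ``paper's own proof'' to compare against. What can be said is that your strategy is exactly the one the present paper develops for the \emph{multipartite} generalisation (Conjecture~\ref{c1}): pass to $M(H)$, invoke Theorem~\ref{t4-k} when $M(H)$ has no transmitter, and lift short paths back to $H$; handle the ``$M(H)$ has a transmitter'' case directly. That part of your write-up is essentially Lemmas~\ref{l3} and~\ref{l4} here, and the lifting step you describe for $\binom{n-2}{k-2}\ge 7$ is the easy Case~1 of the Majority Lemma (Lemma~\ref{l2}).

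The genuine gap is your treatment of the small cases, which you dismiss as ``routine bookkeeping'' or direct inspection. It is not. The paper's Proposition~\ref{p2} exhibits a bipartite $3$-hypertournament $H$ on $n=4$ vertices in which $M(H)$ has no transmitter, $M(H)$ has an $(x_1,x_4)$-path of length~$3$, yet $H$ has \emph{no} $(x_1,x_4)$-path whatsoever. Thus your Case~2 mechanism---pick a $4$-king $x$ of $M(H)$ and lift each short $M(H)$-path to an $H$-path---provably fails there: the $4$-king $x_1$ of $M(H)$ is not a $4$-king of $H$. The Majority Lemma in this paper is carefully stated only for $n\ge 5$, and a fair amount of case analysis (Cases~2--4 of Lemma~\ref{l2}) is needed even then; for $n=4$ bipartite it is simply false. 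So Petrovic's proof of Theorem~\ref{bt4-k} must do something different in the small regime, and your proposal does not supply that argument. Similarly, in your Case~1 the replacement ``$e'\neq e$'' requires $|A_H(z,u)|\ge 2$, which again only follows once $n\ge 5$ (cf.\ the proof of Lemma~\ref{l3}); for $n=4$ this needs a separate treatment you have not given.
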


In the same paper he conjectured the following:

\begin{conj}\label{c1} \cite{P}
Every multipartite $k$-hypertournament  ($k\ge 2$) with at most one transmitter contains a 4-king.
\end{conj}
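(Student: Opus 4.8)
The plan is to reduce Conjecture~\ref{c1} to Theorem~\ref{t4-k} by passing from the hypertournament $H$ to its majority multipartite tournament $M(H)$. The key point to establish is that a short path in $M(H)$ can be lifted to a short path in $H$: concretely, I would try to show that whenever $xy\in A(M(H))$, there is an $(x,y)$-path of length at most $2$ in $H$, and consequently that a $4$-king of $M(H)$ is an $8$-king of $H$ — which is not good enough on its own. So the real plan is more delicate: one must show that a $2$-king of $M(H)$ is already a $4$-king of $H$. For this I would argue that if $xy\in A(M(H))$ then in fact $H$ has an $(x,y)$-path of length $1$ or $2$ \emph{through a controlled intermediate vertex}, and more importantly that if $x$ reaches $z$ in $M(H)$ via a length-$2$ path $x\to w\to z$, then the corresponding detours in $H$ can be spliced so the total $H$-length stays at most $4$.

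The first step is therefore a structural lemma about $M(H)$: \emph{if $H$ has at most one transmitter, then so does $M(H)$}, or at least $M(H)$ has at most one transmitter after a harmless modification. One must be careful here because ties (when $|A_H(u,w)|=\frac12\binom{n-2}{k-2}$) are resolved arbitrarily, so $M(H)$ is not unique; the argument should work for \emph{some} choice of $M(H)$, so I would fix the tie-breaking to avoid creating new transmitters. Suppose $v$ is a transmitter of $M(H)$ that is not a transmitter of $H$: then for every $u$ outside $v$'s part we have $|A_H(u,v)|\le \frac12\binom{n-2}{k-2}$, yet some arc enters $v$ in $H$. I would then play off two such vertices $v,v'$ against each other using the identity $|A_H(u,w)|+|A_H(w,u)|=\binom{n-2}{k-2}$ to derive a contradiction, showing at most one transmitter survives.

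The second and main step is the path-lifting. Apply Theorem~\ref{t4-k} to $M(H)$ to get a $4$-king $x$ of $M(H)$. Given any target $y\in V$, take a shortest $(x,y)$-path $x=u_0u_1\cdots u_\ell=y$ in $M(H)$ with $\ell\le 4$. For each arc $u_iu_{i+1}$ of $M(H)$ I would produce an $(u_i,u_{i+1})$-path in $H$; a single arc of $M(H)$ corresponds to a \emph{majority} of $H$-arcs oriented $u_i\to u_{i+1}$, which for $k\ge 3$ guarantees (via a counting/pigeonhole argument on the $\binom{n-2}{k-2}$ extending $(k-2)$-sets and the $\le 1$ transmitter hypothesis) a common "hub" vertex through which consecutive hops can be routed, keeping the concatenated $H$-path of length at most $4$ rather than $2\ell\le 8$. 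The hard part — and where I expect to spend the most effort — is exactly this splicing: controlling the interaction between consecutive majority arcs so that the intermediate vertices used in $H$ do not blow up the length, and handling the boundary cases where $u_i$ and $u_{i+2}$ lie in the same partite set of $H$ (so no direct $H$-arc between them exists). I would treat $k=2$ separately (it is Theorem~\ref{t4-k} verbatim) and push the main work into $k\ge 3$, where the binomial coefficients are large enough to give the needed pigeonhole room.

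Finally, once $x$ is shown to reach every $y$ in $H$ within $4$ steps, $x$ is a $4$-king of $H$ and the conjecture follows. A useful sanity check along the way is the bipartite case, Theorem~\ref{bt4-k}: the argument restricted to $p=2$ should recover it, which would confirm that the splicing constants are right.
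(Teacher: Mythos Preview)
Your overall architecture --- pass to $M(H)$, invoke Theorem~\ref{t4-k}, then lift short paths back to $H$ --- matches the paper, but two of your three steps have real gaps.

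\textbf{Transmitters in $M(H)$.} Your first step claims that (after tie-breaking) $M(H)$ has at most one transmitter. This is false in general: take $V_1=\{v_1,v_2\}$, $V_2=\{u_1,u_2\}$, and arrange $H$ so that $|A_H(v_i,u_j)|>\frac12\binom{n-2}{k-2}$ for all $i,j$ while $A_H(u_j,v_i)\neq\emptyset$. Then $H$ has no transmitter, but both $v_1,v_2$ are transmitters of $M(H)$ regardless of tie-breaking. The ``play $v$ and $v'$ off against each other'' idea cannot work, since two transmitters in the same partite set impose no constraints on each other. The paper does not try to control transmitters of $M(H)$; instead it proves directly (Lemma~\ref{l3}) that if $M(H)$ has a transmitter then $H$ already has a $2$-king, and only applies Theorem~\ref{t4-k} when $M(H)$ is transmitter-free.

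\textbf{Path lifting.} Your splicing-through-hub-vertices picture misidentifies the obstacle. An arc $u_iu_{i+1}$ of $M(H)$ already gives arcs of $H$ from $u_i$ to $u_{i+1}$; there is no detour and no length doubling. The only issue is that a path in $H$ must use \emph{distinct} arcs, and one hyperarc can serve several consecutive steps. The paper's Majority Lemma (Lemma~\ref{l2}) handles exactly this: each step $u_iu_{i+1}$ has at least $\lceil\frac12\binom{n-2}{k-2}\rceil$ candidate hyperarcs, and one needs a system of distinct representatives for the four steps. For $n\ge 9$ (or $k\ge 4$, $n\ge 7$) the count exceeds $3$ and distinctness is free; the remaining small $(n,k)$ are dispatched by a short Hall's-theorem argument on an auxiliary bipartite graph. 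Your pigeonhole-on-hubs plan is aimed at the wrong target and would not recover length~$4$.

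\textbf{Small cases.} The Majority Lemma genuinely fails at $n=4$, $k=3$ (the paper gives an explicit counterexample in Proposition~\ref{p2}), so you also need a separate bare-hands argument there; the paper does this in Lemma~\ref{l5}.
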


In this short paper, we will solve this conjecture in the affirmative.

The next conjecture of Petrovic \cite{P} is motivated by the fact that Petrovic and Thomassen \cite{PT} proved that the assertion of the conjecture holds for bipartite tournaments.

\begin{thm}\label{thm:PT}\cite{PT}
Every bipartite tournament $B$  without transmitters has at least two 4-kings in each partite set of $B.$
\end{thm}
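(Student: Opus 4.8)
The plan is to first reduce to the strongly connected case and then, inside a strong bipartite tournament, to produce two $4$-kings in each part by a maximum-out-degree argument. Write $B=(X\uplus Y,A)$; by symmetry it suffices to find two $4$-kings in $X$. For the reduction I would pass to the condensation of $B$ into strongly connected components. A single-vertex source component would be a transmitter, so every source component has at least two vertices; a strong bipartite tournament on $\ge 2$ vertices meets both partite sets, and in fact has $\ge 2$ vertices in each part (with a single vertex in some part, one of its out-neighbours in the other part would have no out-neighbour, so it could not be strong). A short argument on the acyclic condensation — any two distinct source components would meet both parts and hence have an arc between them, contradicting that both are sources — shows the source component $C$ is unique, and $C$ meets both parts with $|X\cap C|,|Y\cap C|\ge2$. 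The point of the reduction is that any $4$-king of the strong bipartite tournament $C$ lying in $X$ is automatically a $4$-king of $B$: since every arc between $C$ and another component points out of $C$, from $x\in X\cap C$ one reaches in one step each vertex of $B\setminus C$ lying in $Y$, and in two steps (first to an out-neighbour of $x$ in $Y\cap C$, then out) each vertex of $B\setminus C$ lying in $X$. So it remains to prove: every strong bipartite tournament has at least two $4$-kings in each part.

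The heart of the argument will be the following lemma about a strong bipartite tournament $D=(X\uplus Y,A)$ (which, being strong, has no transmitter and satisfies $|X|,|Y|\ge2$): for every $x_1\in X$, if $x_2$ is a vertex of maximum out-degree in $Q(x_1):=\{x'\in X: N^+(x')\cap N^-(x_1)\ne\emptyset\}$, then $x_2$ is a $4$-king of $D$ and $x_2\ne x_1$. Here $Q(x_1)\ne\emptyset$ because $D$ is strong (take $y\in N^-(x_1)$ and an in-neighbour of $y$), while $x_1\notin Q(x_1)$ since $x_1$ cannot dominate one of its own in-neighbours, so indeed $x_2\in X\setminus\{x_1\}$. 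To prove $x_2$ is a $4$-king I would argue by contradiction: if not, some vertex is at distance $\ge5$ from $x_2$, and a shortest path to it passes through a vertex at distance exactly $5$, which (odd distance from $x_2\in X$) lies in $Y$; call it $v$. Since $x_2\in Q(x_1)$ forces $d_D(x_2,x_1)\le2$, the vertex $v$ cannot lie in $N^+(x_1)$ (else $d_D(x_2,v)\le3$), so $v\in N^-(x_1)$, and also $v\to x_2$. Let $A_2=N^+(N^+(x_2))\cap X$. The absence of an $(x_2,v)$-path of length $3$ forces $v$ to dominate $A_2$; the maximality of $d^+(x_2)$ inside $Q(x_1)$ forces $N^+(w)=N^+(x_2)$ for every $w\in Q(x_1)\setminus A_2$, whence $v$ dominates $Q(x_1)\setminus A_2$; and for $x''\in X\setminus(Q(x_1)\cup A_2)$ one has $N^+(x'')\subseteq N^+(x_1)$ by definition of $Q(x_1)$, so $v\notin N^+(x'')$ (as $v\in N^-(x_1)$) and hence $v\to x''$. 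Thus $v$ dominates all of $X$, i.e.\ $v$ is a transmitter of $D$ — contradicting strong connectivity.

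Granting the lemma, two $4$-kings in $X$ come essentially for free: pick any $x_1\in X$, let $x_2$ be a maximum-out-degree vertex of $Q(x_1)$ (a $4$-king with $x_2\ne x_1$), and let $x_3$ be a maximum-out-degree vertex of $Q(x_2)$ (a $4$-king with $x_3\ne x_2$); then $x_2$ and $x_3$ are two distinct $4$-kings in $X$, and exchanging the roles of $X$ and $Y$ gives two in $Y$. The step I expect to be the main obstacle is the lemma, and within it the choice of $Q(x_1)$: the whole argument hinges on the distance-$5$ witness $v$ being an in-neighbour of $x_1$, which makes the vertices of $X\setminus Q(x_1)$ dominated by $v$ for free while the maximality handles $Q(x_1)$ — one must choose $Q(x_1)$ so that these two halves exactly exhaust $X$, turning $v$ into a forbidden transmitter. (The customary opening move, that a maximum-out-degree vertex of $X$ is itself a $4$-king, can be proved the same way but is not actually needed here, since the lemma applied twice already produces two kings.)
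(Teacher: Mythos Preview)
The paper does not prove this theorem at all: it is quoted from Petrovi\'c and Thomassen \cite{PT} and used only as a black box (in Theorem~\ref{t1}). So there is no in-paper proof to compare against.

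That said, your argument is sound. The reduction to the unique strong source component is correct: singleton sources are transmitters, a strong bipartite tournament on at least two vertices meets both sides with at least two vertices each, two sources would have arcs between them, and a $4$-king of the source component extends to a $4$-king of $B$ by the one- and two-step arguments you give. In the lemma, the key computation that for $w\in Q(x_1)\setminus A_2$ one has $N^+(x_2)\subseteq N^+(w)$ (because $w\notin A_2$ means every $y\in N^+(x_2)$ satisfies $y\not\to w$, hence $w\to y$) together with the maximality of $d^+(x_2)$ in $Q(x_1)$ indeed forces $N^+(w)=N^+(x_2)$, and the three cases $A_2$, $Q(x_1)\setminus A_2$, $X\setminus(Q(x_1)\cup A_2)$ do partition $X$, so the distance-$5$ witness $v\in N^-(x_1)$ dominates all of $X$, a contradiction. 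Applying the lemma twice then yields two distinct $4$-kings in $X$.

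This is in the spirit of the original Petrovi\'c--Thomassen argument, which also proceeds via a maximum-out-degree choice and a contradiction producing a transmitter; your formulation through the auxiliary set $Q(x_1)$ and its iterated application is a clean way to package the ``second king'' step.
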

\begin{conj}\label{c2} \cite{P}
Every bipartite $k$-hypertournament $B$  ($k\ge 2$) without transmitters has at least two 4-kings in each partite set of $B.$
\end{conj}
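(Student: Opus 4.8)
The most promising route is to reduce Conjecture~\ref{c2} to its base case $k=2$, namely Theorem~\ref{thm:PT}, through the majority bipartite tournament $M(B)$. The point is that every arc $xy$ of $M(B)$ is realised by at least one hyperarc of $B$ in which $x$ precedes $y$: since $n>k$ forces $\binom{n-2}{k-2}\ge 1$, the inequality $|A_B(x,y)|\ge \tfrac12\binom{n-2}{k-2}$ already gives $|A_B(x,y)|\ge 1$. Hence any $(x,y)$-walk of length at most $4$ in $M(B)$ lifts to an $(x,y)$-walk of length at most $4$ in $B$, and, shortcutting repeated vertices in the usual way, to an $(x,y)$-path of length at most $4$; so every $4$-king of $M(B)$ is a $4$-king of $B$, and it lies in the same partite set. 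Since $M(B)$ is a bipartite tournament, it would therefore suffice to arrange that $M(B)$ has no transmitter and apply Theorem~\ref{thm:PT}.

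The first step is to ask when the ties in the definition of $M(B)$ can be resolved so that $M(B)$ has no transmitter. A vertex $v\in V_1$ is a transmitter of $M(B)$ exactly when $vu\in M(B)$ for every $u\in V_2$. If some $u\in V_2$ has $|A_B(v,u)|=\tfrac12\binom{n-2}{k-2}$ we simply put $uv$ into $M(B)$; if some $u\in V_2$ has $|A_B(v,u)|<\tfrac12\binom{n-2}{k-2}$ then $uv\in M(B)$ is forced; in either case $v$ is not a transmitter of $M(B)$. So the reduction goes through \emph{unless} $B$ contains a vertex $v$ with $|A_B(v,u)|>\tfrac12\binom{n-2}{k-2}$ for every $u$ in the opposite part — call it a \emph{strict majority transmitter} — while $B$ itself still has no transmitter (so some hyperarc does enter $v$ from the opposite part, and the hypothesis of the conjecture is not contradicted). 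Thus the whole difficulty is concentrated in the case where one or both partite sets contain a strict majority transmitter.

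In that case one has to work inside $B$ directly. A strict majority transmitter $v\in V_1$ dominates every vertex of $V_2$, and since $B$ has no transmitter every other vertex of $V_1$ receives a hyperarc from $V_2$, so $v$ reaches all of $V$ in at most two steps and is itself a $4$-king of $V_1$; the genuinely hard part, which I expect to be the obstacle, is to produce a \emph{second} $4$-king in $V_1$ (and symmetrically two in $V_2$) once the passage to $M(B)$ has collapsed all the non-majority hyperarcs that one would want to route short paths through, so that neither Theorem~\ref{thm:PT} nor Theorem~\ref{bt4-k} supplies it. Given the promise of counterexamples in the abstract, this is precisely the configuration in which I would stop trying to force the proof through and instead search for a small bipartite $k$-hypertournament, with a strict majority transmitter in one part, that has only one $4$-king in that part.
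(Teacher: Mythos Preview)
The paper does not prove Conjecture~\ref{c2}; it \emph{disproves} it by an explicit family of counterexamples (the unnamed Proposition in Section~\ref{sec:res2}, and also Proposition~\ref{p2}). Your write-up is therefore not a proof attempt in the usual sense, and you yourself end by suggesting a search for counterexamples; that instinct is correct, but two things should be said.

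First, the positive reduction you sketch before abandoning it contains a genuine error, not merely an incompleteness. You claim that because every arc $xy$ of $M(B)$ is realised by at least one hyperarc of $B$, any $(x,y)$-walk of length at most $4$ in $M(B)$ lifts to a walk in $B$, and then ``shortcutting repeated vertices in the usual way'' yields an $(x,y)$-path of length at most $4$ in $B$. But a path in a hypertournament requires the \emph{arcs} to be distinct, and nothing prevents the same hyperarc from realising several consecutive edges of the $M(B)$-path; shortcutting vertices does not repair repeated arcs. This is exactly why the paper needs the Majority Lemma (Lemma~\ref{l2}) with its detailed case analysis, and why that lemma requires $n\ge 5$. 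Proposition~\ref{p2} gives a concrete $n=4$, $k=3$ instance in which $M(H)$ has an $(x_1,x_4)$-path of length~$3$ while $H$ has no $(x_1,x_4)$-path at all --- a direct refutation of your lifting step. So even in the ``good'' case where $M(B)$ can be made transmitter-free, your argument as written does not go through; one must invoke Lemma~\ref{l2}, and then only for $n\ge 5$.

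Second, you correctly isolate the obstruction: a vertex that strictly majority-dominates the opposite side yet is not a transmitter of $B$. You stop there, but the paper's counterexample is exactly of this shape and is worth recording. Pick $u\in U$ and $w\in W$; in every arc containing both $u$ and $w$, place them in the first two positions (at least once in each order, so neither is a transmitter); in every arc containing exactly one of $u,w$, place that one first. Then no vertex of $(U\cup W)\setminus\{u,w\}$ can ever reach $u$ (or $w$) along any path, so each partite set has at most one $4$-king. This works for every $k\ge 3$.
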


In this paper, we will first show a couterexample to Conjecture \ref{c2} and then exhibit a wide family of bipartite hypertournaments for which the conclusion of the conjecture holds. %The family contains all bipartite tournaments with at least 5 vertices which have no transmitters.
%We will also prove a theorem that confirms Conjecture \ref{c1} for a wide family of multipartite hypertournaments.

The paper is organized as follows. In the next section, we prove a lemma (Lemma \ref{l2}) which we call {\em the Majority Lemma}, and which
is used to show the positive above-mentioned results. In Section \ref{sec:res}, we provide the counterexample and positive results.
The terminology not introduced in this paper can be found in \cite{BJG}.

\section{The Majority Lemma}

The Majority Lemma, Lemma \ref{l2}, is the main technical result of this paper.
To prove Lemma \ref{l2}, we will use the following simple lemma.

\begin{lemma}\label{l1}
Let $G$ be a bipartite graph with partite sets $U$ and $W$ and let every vertex in $U$ have degree at least $p\ge 1$ and every vertex in $W$ have degree at most $p$, except for one vertex which has degree at most $2p-1.$  Then $G$ has a matching saturating $U.$
\end{lemma}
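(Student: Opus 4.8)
The plan is to verify Hall's condition for the bipartite graph $G$ with the side $U$ to be saturated. So fix an arbitrary subset $X\subseteq U$; I must show $|N_G(X)|\ge |X|$, and then a matching saturating $U$ exists by Hall's theorem. Count the edges $e(X,N_G(X))$ between $X$ and its neighbourhood in two ways. From the $U$-side, every vertex of $X$ has degree at least $p$, so $e(X,N_G(X))\ge p|X|$. From the $W$-side, all edges incident to $X$ land in $N_G(X)$, and each vertex of $N_G(X)$ has degree at most $p$, except possibly one exceptional vertex $w_0$ of degree at most $2p-1$; hence $e(X,N_G(X))\le p|N_G(X)|$ if $w_0\notin N_G(X)$, and $e(X,N_G(X))\le p(|N_G(X)|-1)+(2p-1)=p|N_G(X)|+p-1$ otherwise.

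In the first case, combining the two bounds gives $p|X|\le p|N_G(X)|$, so $|X|\le |N_G(X)|$, as desired. In the second case we get $p|X|\le p|N_G(X)|+p-1$, i.e. $p(|X|-|N_G(X)|)\le p-1<p$, and since the left-hand side is a multiple of $p$ it must be $\le 0$, again yielding $|X|\le |N_G(X)|$. Thus Hall's condition holds for every $X\subseteq U$, and $G$ has a matching saturating $U$.

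The only mild subtlety — really the one place one has to be slightly careful rather than a genuine obstacle — is the handling of the exceptional vertex $w_0$: the naive bound $e(X,N_G(X))\le p|N_G(X)|+(p-1)$ is not by itself enough to conclude $|X|\le|N_G(X)|$, and one needs the integrality/divisibility argument (the difference $|X|-|N_G(X)|$ being an integer while $p(|X|-|N_G(X)|)<p$) to close the gap. Everything else is a routine double-counting plus Hall's theorem, so I would keep the writeup short, emphasising the two-way edge count and the divisibility step.
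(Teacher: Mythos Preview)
Your proof is correct and is essentially the same as the paper's: both verify Hall's condition by a double count of the edges between a subset of $U$ and its neighbourhood, using the degree bounds on each side. The only cosmetic differences are that the paper argues by contradiction (assuming $|S|\ge |N_G(S)|+1$ and deriving $p|S|\le p|S|-1$) rather than splitting on whether the exceptional vertex lies in $N_G(X)$, and that the paper's contradiction replaces your divisibility/integrality step.
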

\begin{proof}
%Note that it suffices to prove this lemma when each vertex of $U$ is of degree $p.$ Thus, assume that  each vertex of $U$ is of degree $p.$
By Hall's theorem, if for every $S\subseteq U$, $|S|\le |N_G(S)|$ then $G$ has a matching saturating $U.$
Suppose that there is a subset $S$ of $U$ such that $|S|\ge |N_G(S)|+1.$ Let $e$ be the number of edges in the subgraph of $G$ induced by $S\cup N_G(S)$ and
observe that $$p|S|\le e\le (|N(S)|-1)p +(2p-1)\le  (|S|-2)p +(2p-1)=|S|p-1,$$ a contradiction.
\end{proof}

Proposition \ref{p2} proved in the next section shows that Lemma \ref{l2} cannot be extended to $n=4$ and $p=2.$

\begin{lemma}\label{l2}
Let $H$ be a $p$-partite $k$-hypertournament with $p\ge 2.$
Let $n\ge 5$ and $n>k\ge 3.$ If a majority $p$-partite tournament $M(H)$ has an $(x,y)$-path $P$ of length at most 4, then  $H$ has such a path of length at most 4.
\end{lemma}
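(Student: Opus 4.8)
The plan is to reduce the problem to a sequence of single-edge replacements: if $M(H)$ has an $(x,y)$-path of length $\ell \le 4$, it suffices to show that for every edge $uw$ of $M(H)$ that lies on such a path, we can find, in $H$, a short path from $u$ to $w$ (ideally of length $1$, but possibly of length $2$ going through a vertex we are allowed to use) so that splicing these pieces together yields an $(x,y)$-path of total length at most $4$. If $uw \in M(H)$, then by definition $|A_H(u,w)| \ge \frac12\binom{n-2}{k-2}$. This is positive exactly when $\binom{n-2}{k-2} > 0$, which holds since $n > k$ and $k \ge 3$ forces $n \ge 4$ and hence $n-2 \ge k-2 \ge 1$; so $A_H(u,w) \ne \emptyset$ and $uw$ is realized by at least one arc of $H$. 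Thus a path of length $\le 4$ in $M(H)$ already gives a \emph{walk} of length $\le 4$ in $H$ using genuine arcs; the only obstacle is that the arcs realizing consecutive edges $uw$ and $wz$ need not be distinct (the same $k$-tuple could realize both), and more seriously the internal vertices of the $H$-path must be distinct.

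To handle this I would process the path $P = x_0 a_1 x_1 a_2 \cdots a_\ell x_\ell$ (with $x_0 = x$, $x_\ell = y$, $\ell \le 4$) greedily from $x$ to $y$, at each step choosing an arc of $H$ realizing the current edge $x_{i-1}x_i$ that avoids conflicts with the already-chosen arcs and vertices. The key counting input is that $|A_H(x_{i-1},x_i)| \ge \frac12\binom{n-2}{k-2}$ is large, while the number of "bad" $k$-tuples (those sharing a vertex with something already committed, or equal to an already-used arc) is controlled by a much smaller binomial coefficient of the form $O\!\left(\binom{n-3}{k-3}\right)$ per forbidden vertex, times the constantly many ($\le \ell \le 4$) vertices to avoid. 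Here is where the hypotheses $n \ge 5$ and $k \ge 3$ are used: one needs $\tfrac12\binom{n-2}{k-2} > c\binom{n-3}{k-3}$ for the relevant small constant $c$, and this is precisely the inequality that fails at $n=4, p=2$ (as the paper notes via Proposition \ref{p2}), so it cannot be avoided. I would set this up so that Lemma \ref{l1} does the bookkeeping: build a bipartite graph with one side the edges of $P$ and the other side a suitable pool of candidate arcs, show each edge of $P$ has many candidates (degree $\ge p$) while each "resource" (a vertex that may be reused, or an arc) is demanded by few edges of $P$ (degree $\le p$, with one exceptional vertex allowed degree $\le 2p-1$), and conclude a system of distinct representatives exists.

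The main obstacle I anticipate is the case analysis around how many internal vertices of $P$ the $H$-path is forced to reuse versus how many fresh vertices it can recruit, and making the Lemma \ref{l1} degree bounds come out cleanly — in particular identifying which single vertex gets the relaxed bound $2p-1$. A subtlety is that when $\ell < 4$ we have slack of $4 - \ell$ extra edges to spend, so a realizing arc of $x_{i-1}x_i$ whose $k$-tuple would collide with a committed vertex $v$ can sometimes be "detoured" by routing through $v$ (replacing the edge by a length-$2$ segment), which is affordable precisely because $\ell \le 4$. I would organize the proof by first disposing of the case $\ell \le 2$ directly (where the slack is large and the counting is easy), then $\ell = 3$, then the tight case $\ell = 4$, in each case exhibiting the bipartite graph and invoking Lemma \ref{l1}. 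The bulk of the work — and the only place real computation enters — is verifying the binomial inequality $\tfrac12\binom{n-2}{k-2} \ge (\text{small constant})\cdot\binom{n-3}{k-3} + (\text{lower-order terms})$ for all $n \ge 5$, $k \ge 3$ with $k < n$, which I would dispatch by a short monotonicity argument in $n$ after checking the boundary cases $k = 3$ and $n = k+1$.
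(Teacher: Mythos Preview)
Your high-level architecture is right and matches the paper: focus on the length-$4$ case $P=x_1x_2x_3x_4x_5$, build a bipartite graph between the four edges of $P$ and the arcs of $H$, and extract a system of distinct representatives via Lemma~\ref{l1}. But you have misidentified the obstruction, and this leads you to the wrong quantities.

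The $H$-path you are looking for uses \emph{exactly} the vertices $x_1,\dots,x_5$; you are not recruiting any new vertices, so the worry about ``internal vertices of the $H$-path must be distinct'' is vacuous, and there are no ``forbidden vertices'' to avoid. The only constraint is that the four realizing arcs $a_1,\dots,a_4$ (with $a_i\in A_H(x_i,x_{i+1})$) be \emph{pairwise distinct as arcs}. Consequently the degree bound you need on the $A(H)$ side of the bipartite graph is not ``how many arcs contain a given vertex'' (your $\binom{n-3}{k-3}$), but rather: for a single arc $a$, how many of the four consecutive pairs $(x_i,x_{i+1})$ can $a$ contain in the correct order? This is at most $k-1$ in general, and more to the point it is at most $2$ when $k=3$, at most $3$ when $k=4$, etc. Your proposed inequality $\tfrac12\binom{n-2}{k-2}\ge c\binom{n-3}{k-3}$ is therefore not the relevant one and would not drive the argument.

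What the paper actually does is split off the easy regime by the crude bound: once $\tfrac12\binom{n-2}{k-2}>3$, each $|A_H(x_i,x_{i+1})|\ge 4$ and a greedy choice of four distinct arcs is immediate. This disposes of all $(n,k)$ except a finite list of small pairs ($k=3$ with $5\le n\le 8$; $k=4$ with $n\in\{5,6\}$; $k\in\{5,6,7\}$ with $n=k+1$). For those, the paper builds the bipartite graph $Z=\{z_1,\dots,z_4\}$ versus $A(H)$, bounds the $A(H)$-side degrees as above, and applies Lemma~\ref{l1}. In the tightest subcases (e.g.\ $k=4,n=5$ and $k=5,n=6$) an arc of high degree in this graph necessarily lists several consecutive $x_i$'s in order, which directly yields a \emph{shorter} $(x_1,x_5)$-path in $H$ (e.g.\ $x_1e_1x_4e_2x_5$); this is the correct use of the ``slack'' you alluded to, not a detour through a fresh vertex. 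If you rewrite your proposal with the arc-distinctness constraint in mind and this case split, it will go through.
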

\begin{proof}
It suffices to prove this lemma for the case when $P$ is of length 4 as the other cases are simpler and similar. Thus, assume that
 $P=x_1x_2x_3x_4x_5.$ By definition of a path, for every $i\in [4],$ $x_i$ and $x_{i+1}$ belong to different partite sets of $H.$
 Now consider the following cases covering all possibilities.

\vspace{1mm}

 \noindent{\bf Case 1: $n\ge 9$ and $3\le k<n$ or $n\ge 7$ and $4\le k< n-1$.}
 Observe that if for every $i\in \{1,2,3,4\}$, \begin{equation}\label{1} |A_H(x_i,x_{i+1})|>3 \end{equation} then we can choose distinct arcs $a_i\in A_H(x_i,x_{i+1})$ such that
 $x_1a_1x_2a_2x_3a_3x_4a_4x_5$ is the required path in $H$.  In particular, inequalities (\ref{1}) will hold if  $\frac{1}{2}\binom{n-2}{k-2}>3.$

 If $n\ge 9$ and $3\le k<n,$ we have $$\frac{1}{2}\binom{n-2}{k-2}\ge \frac{n-2}{2}>3$$  and hence inequalities (\ref{1}) hold.
 If $n\ge 7$ and $4\le k< n-1,$ we have $$\frac{1}{2}\binom{n-2}{k-2}\ge \frac{(n-2)(n-3)}{4}>3.$$  %and hence inequalities (\ref{1}) hold.

 \vspace{1mm}

 \noindent{\bf Case 2: $k=3$ and $5\le n\le 8$.}  Then  \begin{equation}\label{2} |A_H(x_i,x_{i+1})|\ge \frac{1}{2}\binom{n-2}{k-2}\geq  \frac{1}{2}\binom{3}{1}=\frac{3}{2}\end{equation} for $i=1,2,3,4.$ Consider a bipartite graph $G$ with partite sets $Z=\{z_1,z_2,z_3,z_4\}$ and $A(H).$
 We have an edge $z_ia_j$ if $a_j\in A_H(x_i,x_{i+1}).$ By (\ref{2}), each vertex in $Z$ has degree at least two. Since $k=3$, vertices $z_i$ and $z_j$ in $G$ have no common neighbor unless $|i-j|=1.$ Thus, every vertex of $G$ in $A(H)$ has degree at most 2. Thus, by Lemma \ref{l1},
$G$ has a matching saturating $Z$. In other words, there are distinct $a_1,a_2,a_3,a_4\in A(H)$ such that $x_1a_1x_2a_2x_3a_3x_4a_4x_5$ is a path in $H.$

\vspace{1mm}

 \noindent{\bf Case 3: $k=4$ and $5\le n\le 6$.}  Consider the bipartite graph $G$ constructed as in the previous case. Using the computations analogous to those in (\ref{2}), we see that the minimum degree of a vertex in $Z$ is  at least $3$ when $n=6$ and at least $2$ when $n=5.$
 Since $k=4$, there is no common neighbor of all vertices in $Z.$ Thus, every vertex of $G$ in $A(H)$ has degree at most $3$.
 Now consider two subcases.

\vspace{1mm}

 \noindent{\bf Subcase 1: $n=6$.}
  Since every vertex of $G$ in $A(H)$ has degree at most $3$ and every vertex of $G$ in $Z$ has degree at least $3$, by Lemma \ref{l1}, $G$ has a matching saturating $Z$ and we are done as in Case 2.

 \vspace{1mm}

 \noindent{\bf Subcase 2: $n=5$.}  Recall that the minimum degree of a vertex in $Z$ is at least $2$. Suppose that there are two vertices of $G$ in $A(H)$ of degree $3$. This means that
 \begin{equation}\label{3} N_G(z_i)\cap N_G(z_{i+1})\cap N_G(z_{i+2}) \neq\emptyset\end{equation} for $i=1$ or $2$. Indeed, since $k=4$, $N_G(z_1)\cap N_G(z_{j})\cap N_G(z_{4})=\emptyset$
 when either $j=2$ or $3$. Without loss of generality, we assume that (\ref{3}) holds when $i=1$ and  let $e_1\in  N_G(z_1)\cap N_G(z_{2})\cap N_G(z_{3}).$ Thus, $e_1=x_1x_2x_3x_4.$

 If $x_1$ and $x_4$ are in different partite sets of $H$, then $x_1{e_1}{x_4}$. Since $e_1$ does not contain $x_5$,
 we can choose an arc $e_2$ of $H$ which is different from $e_1$ such that $x_4{e_2}{x_5}$. Then $x_1{e_1}{x_4}{e_2}{x_5}$ is a path in $H$.
 Now we assume that $x_1$ and $x_4$ are in the same partite set of $H.$ Then there is an arc $e_1$ of $H$ such that $x_1{e_1}{x_3}$.
Since the degree of $z_3$ in $G$ is at least $2$, we can choose an arc $e_2$ of $H$ which is different from $e_1$ such that $x_3{e_2}{x_4}.$ We can also choose an arc $e_3$ of $H$ which is different from $e_1$ and $e_2$ such that $x_4{e_3}{x_5}$. Indeed, $e_3\ne e_1$ since $e_1$ does not contain $x_5$
and $e_3\ne e_2$ since the degree of $z_4$ in $G$ is at least $2$.
Then $x_1{e_1}{x_3}{e_2}{x_4}{e_3}{x_5}$ is a path in $H$.  Thus, we may assume that every vertex of $G$ in $A(H)$ has degree at most $2$, except for one vertex which has degree at most $3$. Then we can use Lemma \ref{l1} and thus we are done as above.

\vspace{1mm}

 \noindent{\bf Case 4: $k\in \{5, 6,7\}$ and $n=k+1$.} Consider the bipartite graph $G$ constructed as in Case $2$.

 \vspace{1mm}

 \noindent{\bf Subcase 1: $k\in \{6,7\}$.} Using the computations analogous to those in (\ref{2}), we see that the minimum degree of a vertex in $Z$ is at least $3$. If there is a vertex with degree $4$ in $A(H)$, then it means $\{x_1,x_2,x_3,x_4,x_5\}$ is a subset of a vertex set of an arc $e_1$ and the relative order is $x_1,x_2,x_3,x_4,x_5$. If $x_1$ and $x_5$ are in different partite sets, then $x_1{e_1}x_5$ is a path in $H$. Otherwise $x_1$ and $x_4$ are in different partite sets, so $x_1{e_1}x_4.$  There is an arc $e_2$ different from $e_1$ such that $x_4{e_2}x_5$ (since the degree of $z_4$ is at least $3$). Now $x_1{e_1}x_4{e_2}x_5$ is a path in $H.$ Thus, we assume each vertex in $A(H)$ has degree at most $3$, and we are done by Lemma \ref{l1}.
 
  \vspace{1mm}

 \noindent{\bf Subcase 2: $k=5$.} Suppose that the lemma does not hold in this case. %This completes the proof.
% Consider the bipartite graph $G$ constructed as in Case $2$. 
 Using the computations analogous to those in (\ref{2}), we see that the minimum degree of a vertex in $Z$ is at least $2$.
 To obtain a contradiction, it suffices to show that $G$ has at most one vertex of degree at least $3$ in $A(H).$
 Suppose that $G$ has at least two vertices of degree at least $3$ in $A(H).$ This means that (\ref{3}) holds for $i=1$ or $2$.
 Since $H$ can have only one arc with vertex set $\{x_1,x_2,x_3,x_4,x_5\},$ we have
\begin{equation} \sum^3_{j=2}|N_G(z_1)\cap N_G(z_{j})\cap N_G(z_{4})|\leq 1 \end{equation}
 Without loss of generality, we assume that (\ref{3}) holds when $i=1$ and  let $e_1\in  N_G(z_1)\cap N_G(z_{2})\cap N_G(z_{3}).$ If we restrict $e_1$ to the vertices $\{x_1,x_2,x_3,x_4\},$ we
obtain $e'_3=x_1x_2x_3x_4.$

If $x_1$ and $x_4$ are in the different partite sets, then $x_1{e_1}{x_4}$. Since the degree of $z_4$ in $G$ is at least 2, we can choose an arc $e_2$ of $H$ which is different from $e_1$ such that $x_4{e_2}{x_5}$. Then $x_1{e_1}{x_4}{e_2}{x_5}$ is a path in $H$, a contradiction. Now we assume $x_1$ and $x_4$ are in the same partite set. Then $x_1{e_1}{x_3}$. Since the degree of $z_3$ in $G$ is at least 2, we can choose an arc $e_2$ of $H$ which is different from $e_1$ such that $x_3{e_2}{x_4}.$ Since the degree of $z_4$ in $G$ is at least $2$, we can choose an arc $e_3$ of $H$ such that $x_4{e_3}{x_5}$ and $e_3\ne e_2.$
Suppose $e_3=e_1.$ Then $e_1=x_1x_2x_3x_4x_5$ and $x_1e_1x_5,$ a contradiction.  Thus, $e_3\ne e_1$ and $x_1{e_1}{x_3}{e_2}{x_4}{e_3}{x_5}$ is a path in $H$, a contradiction.  %This completes the proof. 
\end{proof}

\section{Main Results}\label{sec:res}

In Section \ref{sec:res1}, using the Majority Lemma and other results, we solve Conjecture \ref{c1} in affirmative. In Section \ref{sec:res2}, we describe a family of couterexamples to Conjecture \ref{c2} and prove a sufficient condition of when the statement of Conjecture \ref{c2} holds.

\subsection{Results on Conjecture  \ref{c1}}\label{sec:res1}

\begin{lemma}\label{l3}
Let $H=(V,A)$ be a multipartite $k$-hypertournament with at most one transmitter and let $M(H)$ be a majority multipartite tournament of $H$.
Let $n\ge 5$ and $n>k\ge 3.$
If $M(H)$ has at least one transmitter, then $H$ has a 2-king.
\end{lemma}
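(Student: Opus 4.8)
The plan is to exploit the fact that a transmitter in $M(H)$ gives us a vertex which ``majority-beats'' everything outside its own partite set, and then to chase this dominance through short paths in $H$ itself. Let $v$ be a transmitter of $M(H)$, so that for every vertex $u$ in a partite set different from that of $v$ we have $|A_H(v,u)| \ge \tfrac12\binom{n-2}{k-2}$, hence $A_H(v,u)\ne\emptyset$ (using $n\ge 5$, $k\ge 3$). Thus in $H$ there is an arc from $v$ to every vertex outside $v$'s partite set, i.e. $v$ reaches every such vertex by a path of length $1$.

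The remaining issue is to reach the vertices in $v$'s own partite set, call it $V_i$, by a path of length at most $2$ in $H$. Here is where the hypothesis on $H$ — at most one transmitter — comes in, together with the hypothesis that $M(H)$ has a transmitter. First I would argue that $v$ itself is \emph{not} a transmitter of $H$: if it were, then since $M(H)$ has a transmitter $v$ and $H$ has at most one transmitter, $v$ is the unique one, but then one checks that for $w\in V_i$ we need another route; more carefully, I expect the argument splits on whether $V_i = V$ (only one partite set — impossible since $p\ge 2$) or $p\ge 2$. Pick any $w\in V_i$, $w\ne v$. Choose any vertex $z$ in a partite set other than $V_i$ (exists since $p\ge 2$ and all parts nonempty). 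We already have an arc $v\to z$ in $H$. If there is an arc $z\to w$ in $H$, then $v\,z\,w$ is an $(v,w)$-path of length $2$ and we are done. So the only obstruction is that $A_H(z,w)=\emptyset$ for \emph{every} choice of $z$ outside $V_i$ — but that says precisely that $w$ is a transmitter of $H$.

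So if $v$ fails to be a $2$-king, there is some $w\in V_i$ that is a transmitter of $H$; since $w$ receives no arcs from outside $V_i$, in particular $|A_H(u,w)|=0$ and $|A_H(w,u)|=\binom{n-2}{k-2}>\tfrac12\binom{n-2}{k-2}$ for all $u\notin V_i$, so $w$ is also a transmitter of $M(H)$. Now $H$ has at most one transmitter, so $w=v$ — but then $v$ is a transmitter of $H$, so the arc $v\to z$ we used above cannot exist unless $z$ is in $V_i$, contradicting our choice of $z$. Wait: I need to re-examine — if $v$ is a transmitter of $H$ then there is no arc \emph{into} $v$ from outside, but arcs \emph{out of} $v$ are fine; being a transmitter does not forbid $A_H(v,u)\ne\emptyset$. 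So the contradiction must instead be extracted differently: $v$ and $w$ are both transmitters of $M(H)$, and the uniqueness of transmitters in $H$ forces $v=w$, meaning the ``bad'' vertex $w$ coincides with $v$, so every other vertex of $V_i$ is reachable in $\le 2$ steps and $v$ is a $2$-king. I expect the one genuinely delicate point — and the main obstacle — to be handling the degenerate count $\binom{n-2}{k-2}$ near the boundary to be sure $\tfrac12\binom{n-2}{k-2}\ge 1$ so that majority-domination in $M(H)$ really yields a nonempty arc set in $H$; the bound $n\ge 5$, $k\ge 3$ gives $\binom{n-2}{k-2}\ge \binom{3}{1}=3$, which is comfortably enough, so this should go through cleanly.
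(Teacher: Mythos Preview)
Your overall strategy matches the paper's, but there is a genuine logical gap in the final step. You choose $v$ to be an arbitrary transmitter of $M(H)$, and then, when a vertex $w\in V_i\setminus\{v\}$ turns out to be a transmitter of $H$, you conclude ``$H$ has at most one transmitter, so $w=v$''. This does not follow: the hypothesis bounds the number of transmitters of $H$, and you have only shown that $w$ is a transmitter of $H$ --- you have not shown that $v$ is one. Being a transmitter of $M(H)$ does not make $v$ a transmitter of $H$. So nothing forces $w=v$, and if $w\ne v$ then $v$ genuinely cannot reach $w$ at all (every $(v,w)$-path must pass through some $z\notin V_i$, but $A_H(z,w)=\emptyset$ for all such $z$), so $v$ is not a $2$-king.

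The paper avoids this by choosing $v$ more carefully: if $H$ has a transmitter, take $v$ to be \emph{that} transmitter (it is automatically a transmitter of $M(H)$ as well, since then $|A_H(v,u)|=\binom{n-2}{k-2}$ for all $u$ outside its partite set); otherwise take any transmitter of $M(H)$. With this choice, every $w\in V_i\setminus\{v\}$ is guaranteed not to be a transmitter of $H$, and your length-$2$ argument goes through.

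One further point you glossed over: a path in $H$ requires \emph{distinct} arcs, so when you form $v\,a\,z\,e\,w$ you must check $a\ne e$. This is where the lower bound $|A_H(v,z)|\ge \lceil\tfrac12\binom{n-2}{k-2}\rceil\ge 2$ (from $n\ge5$, $k\ge3$) is actually needed, not merely $|A_H(v,z)|\ge 1$: it lets you pick $a\in A_H(v,z)$ with $a\ne e$. The paper makes this step explicit.
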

\begin{proof}
Let $V_1$ be the partite vertex set containing all transmitters  of $M(H).$ Let $v$ be the transmitter of $H,$ if $H$ has a transmitter, and an arbitrary transmitter of $M(H)$, otherwise. Clearly, $v\in V_1.$
Observe that for every $u\in V\setminus V_1$, there is an arc $a\in A_H(v,u)$ implying that $vau.$ Note that for every $w\in V_1\setminus \{v\},$ there are a vertex $u\in V\setminus V_1$ and an arc $e$ of $H$ such that
$uew.$ As in Lemma \ref{l2}, it is easy to see that  $|A_H(v,u)|\ge 2.$ Thus, there is an arc $a\in A_H(v,u)$ distinct from $e$ implying that $vauew$ is a path.
\end{proof}

\begin{lemma}\label{l4}
Let $H=(V,A)$ be a multipartite $k$-hypertournament and let $n\ge 5$ and $n>k\ge 3.$ If $H$ has at most one transmitter then $H$ has a 4-king.
\end{lemma}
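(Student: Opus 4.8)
The plan is to reduce the statement to the majority multipartite tournament $M(H)$ and then invoke Theorem \ref{t4-k} together with the Majority Lemma (Lemma \ref{l2}). First I would split into two cases according to whether $M(H)$ has a transmitter. If $M(H)$ has at least one transmitter, then by Lemma \ref{l3} the hypertournament $H$ has a $2$-king, which is in particular a $4$-king, and we are done immediately. So the interesting case is when $M(H)$ has \emph{no} transmitter; here I would apply Theorem \ref{t4-k} to $M(H)$ (a multipartite tournament with zero transmitters, hence at most one) to obtain a vertex $x$ that is a $4$-king of $M(H)$.

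The second step is to transfer this $4$-king back to $H$. By definition, $x$ being a $4$-king of $M(H)$ means that for every $y\in V$ there is an $(x,y)$-path of length at most $4$ in $M(H)$. Since we are in the regime $n\ge 5$ and $n>k\ge 3$, Lemma \ref{l2} applies verbatim: each such path in $M(H)$ lifts to an $(x,y)$-path of length at most $4$ in $H$. Hence $x$ is a $4$-king of $H$, completing the proof. One should double-check the degenerate possibility that $V$ has only the partite set containing $x$ and perhaps the trivial length-$0$ path to $x$ itself, but these cause no issue since paths of length $0$ (and shorter paths in general) were already noted to be handled by Lemma \ref{l2}, and $M(H)$ has at least two partite sets by hypothesis.

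The main obstacle, and the reason the lemma is not entirely immediate, is precisely the gap between $M(H)$ and $H$: a path in the majority tournament only tells us that for each consecutive pair $x_ix_{i+1}$ more than half of the relevant $k$-tuples put $x_i$ before $x_{i+1}$, but to realize the path in $H$ we must pick \emph{distinct} arcs $a_1,\dots,a_4$ simultaneously. This is exactly the content of Lemma \ref{l2}, whose proof handles the small-$n$ cases by a Hall-type matching argument (Lemma \ref{l1}). So the work in Lemma \ref{l4} itself is light — it is a two-line case split feeding into Lemma \ref{l3}, Theorem \ref{t4-k}, and Lemma \ref{l2} — and all the technical difficulty has been front-loaded into the Majority Lemma.
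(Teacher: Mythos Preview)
Your proposal is correct and matches the paper's proof essentially line for line: split on whether $M(H)$ has a transmitter, invoke Lemma~\ref{l3} in the first case, and combine Theorem~\ref{t4-k} with Lemma~\ref{l2} in the second. The additional commentary you give about the role of the Majority Lemma is accurate but not needed for the argument itself.
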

\begin{proof}
Let $M(H)$ be a majority multipartite tournament of $H$. If $M(H)$ has no transmitters, then by Theorem \ref{t4-k}, $M(H)$ has a 4-king $x$. By Lemma \ref{l2}, $x$ is a 4-king of $H.$
If $M(H)$ has transmitters, then we apply Lemma \ref{l3}.
\end{proof}

\begin{lemma}\label{l5}
Let $H=(V,A)$ be a $p$-partite $k$-hypertournament with $k=3,$  $n=4$ and $p\ge 2.$ If $H$ has at most one transmitter then $H$ has a 4-king.
\end{lemma}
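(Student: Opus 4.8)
The plan is to dispose first of the case in which $H$ has a transmitter, and then, assuming $H$ is transmitter-free, to split according to the shape of the partition. Since $n=4$ and $k=3$, the part sizes sum to $4$, so up to relabelling the partition is $1+1+1+1$, $2+1+1$, $2+2$, or $3+1$; note also that $\binom{n-2}{k-2}=\binom{2}{1}=2$ and that every path in a $4$-vertex digraph has length at most $3$. If $H$ has a transmitter $v$, I would show $v$ is a $2$-king along the lines of Lemma~\ref{l3}: for every $u$ outside $v$'s part, $|A_H(v,u)|=2$, so $v$ reaches $u$ in one step; and for every $w\neq v$ in $v$'s part, $w$ is not a transmitter (as $H$ has at most one), so $uew$ for some arc $e$ and some $u$ outside $v$'s part, and since $|A_H(v,u)|=2$ there is an arc $a\neq e$ with $vau$, giving the path $vauew$.

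Assume now $H$ is transmitter-free. If the partition is $1+1+1+1$ then no arc has been deleted, so $H$ is an ordinary $3$-hypertournament on $4$ vertices and has a $2$-king by the theorem of Brcanov et al.\ \cite{BPT}. If the partition is $2+2$ then $H$ is a bipartite $3$-hypertournament with at most one transmitter, so it has a $4$-king by Theorem~\ref{bt4-k}. If the partition is $3+1$, say $V_2=\{u\}$ with $|V_1|=3$, then each $w\in V_1$ fails to be a transmitter, which forces $A_H(u,w)\neq\emptyset$ (as $u$ is the only vertex in a different part from $w$); hence $u$ reaches every vertex in one step, so $u$ is a $1$-king. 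This leaves the partition $2+1+1$, say $V=\{a,b,c,d\}$ with parts $\{a,b\}$, $\{c\}$, $\{d\}$.

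For the $2+1+1$ case I would pass to a majority multipartite tournament $M=M(H)$, whose underlying graph is $K_4$ minus the edge $ab$. All transmitters of $M$ lie in one part, so $M$ has at least two transmitters only if $a$ and $b$ are both transmitters of $M$; that forces $ac,ad,bc,bd\in M$, and tracing the consequent orderings of the four arcs of $H$ (on $\{a,b,c\}$, $\{a,b,d\}$, $\{a,c,d\}$, $\{b,c,d\}$) shows $a$ would then be a transmitter of $H$, a contradiction. Re-breaking one of the ties among $ac,ad,bc,bd$, we may therefore assume $M$ has at most one transmitter, so Theorem~\ref{t4-k} yields a $4$-king $z$ of $M$; a direct check on $K_4-e$ with at most one transmitter shows $z$ may be chosen to reach every vertex within two steps in $M$. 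It then remains to lift such short paths: a one-step $(z,y)$-path in $M$ gives an arc placing $z$ before $y$ in $H$; for a two-step path $zty$ in $M$, if some $a'\in A_H(z,t)$ differs from some $b'\in A_H(t,y)$ then $za'tb'y$ is a path in $H$, and otherwise $|A_H(z,t)|=|A_H(t,y)|=1$ and their common arc is the ordered triple $zty$, which already places $z$ before $y$; if $z,y$ lie in different parts this is a one-step path in $H$, while if $\{z,y\}=\{a,b\}$ one reroutes through $c$ and/or $d$ (using that neither $a$ nor $b$ is a transmitter of $H$) to obtain a $(z,y)$-path of length at most $3$.

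The step I expect to be the main obstacle is this last one: Lemma~\ref{l2} genuinely fails for $n=4$ (Proposition~\ref{p2}), so one cannot simply invoke it, and the rerouting in the case $\{z,y\}=\{a,b\}$ has to be checked by an explicit---though elementary---case analysis on the relative orders within the (at most four) arcs of $H$.
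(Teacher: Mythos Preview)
Your treatment of the transmitter case and of the partitions $2+2$, $3+1$ and $1+1+1+1$ is fine (the $3+1$ case is actually already subsumed by Theorem~\ref{bt4-k}, and for $1+1+1+1$ the paper quotes \cite{GY} on Hamilton paths rather than \cite{BPT}, but your citation is equally valid). The divergence is in the $2+1+1$ case.

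For $p=3$ the paper does \emph{not} pass through $M(H)$ at all. With $V_1=\{x_1,x_2\}$, $V_2=\{x_3\}$, $V_3=\{x_4\}$ and $H$ transmitter-free, it simply looks at the single arc $e_1$ on $\{x_1,x_3,x_4\}$ (the one avoiding $x_2$) and observes that the first entry of $e_1$ is a $2$-king: it reaches the other two entries of $e_1$ in one step via $e_1$, and since $x_2$ is not a transmitter there is an arc $e_2$ (necessarily $\neq e_1$, as it contains $x_2$) with $x_3e_2x_2$ or $x_4e_2x_2$, giving a path of length at most $2$ to $x_2$. That is the entire argument.

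Your route through $M(H)$ recreates exactly the difficulty that Proposition~\ref{p2} warns about. You correctly note that $M$ can be chosen with at most one transmitter and that $K_4-e$ then has a $2$-king; but, as your own example-hunting would reveal, that $2$-king may be forced to lie in $\{a,b\}$ for the particular $M$ you fixed, and then the ``rerouting'' you defer is a genuine case analysis that you have not carried out. So the proposal is not wrong, but it leaves precisely the crux unproved, and the work needed to close it is comparable to (and messier than) the paper's direct two-line argument. If you want to salvage the $M(H)$ approach, the cleanest fix is to exploit the tie-breaking freedom more aggressively so that $c$ or $d$ becomes the $2$-king of $M$ (then lifting is trivial since $c,d$ are singletons); but at that point you are essentially rediscovering the paper's direct argument in disguise.
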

\begin{proof}
By Theorem \ref{bt4-k}, this lemma holds for $p=2$ and so we may assume that $p\ge 3.$ It is well known that every $k$-hypertournament with more than $k$ vertices has a Hamilton path \cite{GY}. Observe that for $p=4$ the first vertex of a Hamilton path in $H$ is a 3-king.
Now we may assume that $p=3.$
Let  $V=V_1\cup V_2 \cup V_3$ be a partition of vertices of $H$. Without loss of generality, we may assume that $V_1=\{x_1,x_2\}$, $V_2=\{x_3\}$ and $V_3=\{x_4\}$.

First assume that $H$ has the unique transmitter $v.$ If $v=x_3$ or $v=x_4$, then $v$ is a 1-king of $H$. Thus, we assume without loss of generality that $v=x_1.$ Since $v$ is a transmitter, $va_1x_3$ and $va_2x_4$ for some arcs $a_1$ and $a_2$ of $H.$
Since $x_2$ is not a transmitter, there is an arc $e_1$ such that $y{e_1}x_2$, where $y\in V_2 \cup V_3$. By the definition of a transmitter, $v$ precedes $y$ in every arc containing $v$ and $y$. Consequently, there is an arc $e_2$ different from $e_1$ such that $v{e_2}y$.
Hence $v{e_2}y{e_1}{x_2}$ is a path from $v$ to $x_2.$ So $v$ is a 2-king.

Now assume that $T$ has no transmitter. Consider the arc $e_1$ containing $x_1$, $x_3$, and $x_4.$ If $x_1$ is in the first position of $e_1$, since $x_2$ is not a transmitter, there is an arc $e_2$ different from $e_1$ such that $x_3{e_2}x_2$ or $x_4{e_2}x_2$.
Hence $x_1{e_1}x_3{e_2}{x_2}$ or $x_1{e_1}x_4{e_2}{x_2}$ is a path from $x_1$ to $x_2$, implying that  $x_1$ is a 2-king. Without loss of generality, we now assume that $x_3$ is in the first position of $e_1$. Since $x_2$ is not a transmitter, there is an arc $e_2$,
where $x_3$ or $x_4$ preceds $x_2$.
Hence $x_3$ is a 2-king.
\end{proof}

Lemmas \ref{l4} and \ref{l5} imply the following result solving Conjecture \ref{c1} in affirmative.

\begin{thm}
Every multipartite hypertournament with at most one transmitter has a 4-king.
\end{thm}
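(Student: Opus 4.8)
The plan is to combine the two lemmas already proved, namely Lemma \ref{l4} and Lemma \ref{l5}, and then dispose of the handful of tiny cases that neither lemma covers. Let $H$ be a multipartite $k$-hypertournament with at most one transmitter, on $n$ vertices with partite sets $V_1, \dots, V_p$, $p \ge 2$. Recall the standing hypothesis $n > k \ge 2$. I would split on the value of $k$ and of $n$.

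First, if $k \ge 3$ and $n \ge 5$, then Lemma \ref{l4} applies directly and gives a 4-king, so we are done. Second, if $k = 3$ and $n = 4$, then Lemma \ref{l5} applies and gives a 4-king. (Note $k = 3$, $n < k$ is impossible and $k = 3$, $n = k = 3$ is excluded since we need $n > k$; and $k \ge 4$ forces $n \ge 5$, already handled.) The only remaining case is $k = 2$, i.e.\ $H$ is a multipartite (ordinary) tournament, and here the conclusion is exactly Theorem \ref{t4-k} (Gutin; Petrovic--Thomassen): every multipartite tournament with at most one transmitter has a 4-king. So in every case $H$ has a 4-king.

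The only point that needs a little care is checking that these cases are genuinely exhaustive, i.e.\ that there is no $(k,n)$ with $n > k \ge 2$ falling through the cracks. For $k = 2$ we invoke Theorem \ref{t4-k}. For $k = 3$: if $n = 4$ use Lemma \ref{l5}; if $n \ge 5$ use Lemma \ref{l4}; and $n = 3$ is excluded by $n > k$. For $k \ge 4$: the constraint $n > k$ forces $n \ge 5$, so Lemma \ref{l4} applies. This covers all admissible pairs, and there is no real obstacle here — the theorem is essentially a bookkeeping corollary of the preceding lemmas, with the Majority Lemma (Lemma \ref{l2}) and Theorem \ref{t4-k} doing the substantive work inside Lemma \ref{l4}. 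If anything, the one subtlety worth a sentence is that Lemma \ref{l5} already internally reduces to Theorem \ref{bt4-k} for $p = 2$ and handles $p = 3, 4$ by hand (using the existence of a Hamilton path in any $k$-hypertournament with more than $k$ vertices), so no separate argument for small $p$ is needed in the final assembly.

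\begin{proof}
Let $H$ be a multipartite $k$-hypertournament on $n$ vertices with at most one transmitter; by definition $n > k \ge 2$. If $k = 2$, then $H$ is a multipartite tournament, and the claim is Theorem \ref{t4-k}. If $k \ge 4$, then $n > k$ forces $n \ge 5$, and Lemma \ref{l4} yields a 4-king. If $k = 3$, then $n \ge 4$; for $n = 4$ apply Lemma \ref{l5}, and for $n \ge 5$ apply Lemma \ref{l4}. In every case $H$ has a 4-king.
\end{proof}
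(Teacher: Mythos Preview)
Your proof is correct and matches the paper's approach: the paper simply states that Lemmas~\ref{l4} and~\ref{l5} imply the theorem, and you have spelled out the case split in full. If anything, your version is slightly more careful in explicitly invoking Theorem~\ref{t4-k} for $k=2$, which the paper leaves implicit (presumably because that case is already the known result being generalized).
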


%This theorem is Theorem 3.7.5 in \cite{BJG}.  Its proof uses an argument adapted from \cite{GY00}.

\subsection{Results on Conjecture  \ref{c2}}\label{sec:res2}

The next result describes a family of counterexamples to Conjecture \ref{c2}.

\begin{prop}
For every $k\ge 3,$ there is a bipartite $k$-hypertournament $B$ without transmitters which has at most one 4-king in each of its partite sets.
\end{prop}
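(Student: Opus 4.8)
The plan is to construct, for each $k \ge 3$, an explicit bipartite $k$-hypertournament $B$ with partite sets $V_1, V_2$ and carefully controlled arc orientations, so that exactly one vertex in each partite set functions as a near-source while all other vertices are "far" from some target. The natural first step is to choose the sizes of the partite sets large enough that $\binom{n-2}{k-2}$ is big, which gives freedom to orient arcs almost however we like while keeping the hypertournament axiom (exactly one orientation of each $k$-subset that meets both parts) satisfied. Concretely I would take $V_1 = \{u_0, u_1, \dots\}$ and $V_2 = \{w_0, w_1, \dots\}$ with $|V_1| = |V_2| = m$ for a suitable $m = m(k)$, designate $u_0$ and $w_0$ as the intended unique $4$-kings, and force every non-$4$-king vertex to have a vertex it cannot reach within distance $4$.

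The key structural idea is to make $u_0$ dominate (in the sense of having all $A_B(u_0, w)$ nonempty with arcs available) so that $u_0$ reaches $V_2$ in one step and $V_1$ in at most three steps via $w_0$, and symmetrically for $w_0$; then to sabotage every other vertex. For a vertex $u_i$ with $i \ge 1$, I would arrange that in \emph{every} arc containing both $u_i$ and $w_0$, $w_0$ precedes $u_i$ (so $u_i$ has no arc to $w_0$), and moreover that the only vertices $u_i$ can reach quickly are "downstream" of $u_0$, so that to get back to $w_0$ requires passing through $u_0$ — but the path $u_i \to \cdots \to u_0$ already uses up the budget. The cleanest way to block this is a layered/transitive-like construction: orient arcs so that $B$ behaves, at the level of reachability, like a long transitive structure with $u_0, w_0$ near the top, making every other vertex fail to reach the bottom layer within $4$ steps. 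To verify there are no transmitters, I check that each of $u_0, w_0$ receives at least one arc (easy, since the construction is not globally transitive — we can make $u_0$ lose to some $w_j$) and each $u_i, w_j$ with positive index also receives and sends arcs.

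The steps, in order, would be: (1) fix $m$ in terms of $k$ so that $\binom{m \cdot 2 - 2}{k-2} \ge$ some small constant needed below; (2) write down the orientation rule for each $k$-subset $S$ meeting both parts, specified by a priority ordering of vertices (a linear order on $V$ with $u_0, w_0$ high but not top), taking the arc of $S$ to list its vertices in that priority order — this automatically satisfies the hypertournament axiom; (3) show $u_0$ and $w_0$ are $4$-kings by exhibiting explicit short paths to every vertex; (4) show every other vertex $v$ fails: identify a target $t(v)$ and argue every walk of length $\le 4$ from $v$ misses $t(v)$, using that arcs respect the priority order so a step $x \to y$ forces $x$ before $y$ in the order restricted to that arc's vertices — care is needed because the order on a $k$-subset is only the induced order, so "reachability" is not literally the global order and this is the delicate point; (5) check no transmitters.

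The main obstacle I anticipate is step (4): because an arc records only the \emph{induced} order on its $k$ vertices, a vertex low in the global priority order can still precede a high vertex inside a $k$-subset that happens to avoid all the intermediate vertices — so a pure transitive-order construction does \emph{not} prevent short paths, and in fact in a $k$-hypertournament with $k \ge 3$ every vertex tends to be a short king (this is exactly why the result is about having \emph{few} kings, not about whether kings exist). So the real work is to choose the construction so that the only "order-violating" arcs available to a bad vertex $v$ lead to vertices that are themselves far from $t(v)$; I expect to need $t(v)$ to be a specific sink-like vertex $w^*$ (or $u^*$) that loses only to vertices in a small designated set, and then to bound the $4$-step neighborhood of $v$ by a counting/case argument showing it stays within $V \setminus \{t(v)\}$. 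Getting this bound to hold uniformly in $k$ — rather than just for small $k$ — is where I would spend the most care, likely by making the "bad" region grow with $k$ so the arithmetic still closes.
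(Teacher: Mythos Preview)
Your proposal is not a proof: you yourself flag step (4) as the obstacle and leave it as a heuristic (``I expect to need'', ``likely by making the bad region grow with $k$''). The gap is real, because your priority-order construction, as you note, does not by itself block short paths, and you give no argument that the suggested fix can be made to work.

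You are, however, already close to a much simpler construction that avoids step (4) entirely. You propose that $w_0$ precede every $u_i$ ($i\ge 1$) in \emph{every} arc containing both; the missing move is to impose the symmetric condition on $u_0$ and, crucially, to put $u_0$ and $w_0$ in the first two positions of every arc that contains both of them. Concretely: in each arc containing $u_0$ but not $w_0$, place $u_0$ first; in each arc containing $w_0$ but not $u_0$, place $w_0$ first; in each arc containing both, place $\{u_0,w_0\}$ in positions $1$ and $2$, making sure each of the two orders occurs at least once. All remaining positions, and all arcs containing neither $u_0$ nor $w_0$, are arbitrary.

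With this rule the only vertex that ever precedes $u_0$ in any arc is $w_0$, and vice versa. Hence for $v\notin\{u_0,w_0\}$ there is no $(v,u_0)$-path of \emph{any} length: the last arc of such a path would need a vertex preceding $u_0$, forcing that vertex to be $w_0$, and the same obstruction then applies to reaching $w_0$. So no such $v$ is a $q$-king for any $q$, and the number $4$ never enters the argument. There are no transmitters, since $u_0$ and $w_0$ each lose at least one arc to the other, while every other vertex loses to whichever of $u_0,w_0$ lies in the opposite partite set. This is exactly the paper's construction; your layered scheme and the $k$-dependent counting are unnecessary.
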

\begin{proof}
Let $U$ and $W$ be partite sets of $B.$
Choose a vertex $u$ in $U$  and a vertex $w$ in $W.$ Let every arc of $B$ with both $u$ and $w$ have both of them in the first and second position such that in at least one such arc $u$ is the first and in at least one such arc $w$ is the first. 
Let every arc of $B$ containing $u$ but not $w$ have $u$ in the first position and let every arc of $B$ containing $w$ but not $u$ have $w$ in the first position.
Clearly, $B$ has no transmitters, but no vertex $v$ in $(U\cup W)\setminus \{u,w\}$ can be a 4-king as there is no path from $v$ to either $u$ or $w.$
\end{proof}

The next result is a sufficient condition of when the conclusion of Conjecture \ref{c2} holds. It follows directly from Theorem \ref{thm:PT} and the Majority Lemma.

\vspace{1mm}

\begin{thm}\label{t1}
Let $B$ be a bipartite hypertournament with partite sets $U$ and $W$ and with at least 5 vertices. If a majority bipartite tournament $M(B)$ has no transmitters, then $B$ has at least two 4-kings in each $U$ and $W.$
\end{thm}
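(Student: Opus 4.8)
The plan is to deduce Theorem~\ref{t1} directly from Theorem~\ref{thm:PT} together with the Majority Lemma (Lemma~\ref{l2}). First I would form the majority bipartite tournament $M(B)$ of $B$; by hypothesis it has partite sets $U$ and $W$ and no transmitters. Applying Theorem~\ref{thm:PT} to $M(B)$, we obtain at least two $4$-kings in $U$ and at least two $4$-kings in $W$ of the tournament $M(B)$. Call such a vertex $x$; so for every $y\in U\cup W$, $M(B)$ contains an $(x,y)$-path of length at most $4$.

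Next I would transfer each of these kings back to $B$. Fix one of the four-or-more kings $x$ of $M(B)$ and an arbitrary vertex $y$. Since $B$ has at least $5$ vertices and is a bipartite ($2$-partite) $k$-hypertournament with $k\ge 2$, we are in the regime covered by Lemma~\ref{l2} provided $k\ge 3$; and for $k=2$, $B$ is an ordinary bipartite tournament, $M(B)=B$ (up to the choice on ties, which does not affect reachability here since ties simply mean we may pick either orientation and a length-$\le 4$ path still exists), so the statement is exactly Theorem~\ref{thm:PT}. For $k\ge 3$, the $(x,y)$-path of length at most $4$ in $M(B)$ is, by Lemma~\ref{l2}, also present in $B$. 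As $y$ was arbitrary, $x$ is a $4$-king of $B$. Doing this for each of the $\ge 2$ kings in $U$ and each of the $\ge 2$ kings in $W$ gives the claim.

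The only real subtlety—the ``main obstacle'' such as it is—is bookkeeping the low-$k$ and small-$n$ boundary: Lemma~\ref{l2} is stated for $n\ge 5$ and $n>k\ge 3$, so one must separately note that the hypothesis $|U\cup W|\ge 5$ supplies $n\ge 5$, and that the case $k=2$ is handled tautologically by Theorem~\ref{thm:PT} itself (with the harmless tie convention). No new combinatorial work is needed beyond invoking these two results; everything else is routine.
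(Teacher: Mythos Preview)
Your proposal is correct and follows exactly the route the paper indicates: the paper states that Theorem~\ref{t1} ``follows directly from Theorem~\ref{thm:PT} and the Majority Lemma,'' and you have spelled out precisely that deduction. One small remark: for $k=2$ there are in fact no ties, since $\binom{n-2}{0}=1$ is odd, so $M(B)=B$ on the nose and your parenthetical about tie conventions is unnecessary (though harmless).
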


Our final result shows that the Majority Lemma cannot be extended to $n=4$ and $p=2.$ The proof provides another counterexample to Conjecture \ref{c2}.

 \begin{prop}\label{p2}
 For $k=3$ and $n=4,$ there is a bipartite hypertournament $H$ with partite sets $U$ and $W$ such that
(i) $|U|=|W|=2$, (ii) a majority bipartite tournament $M(H)$ has no transmitters, (iii) $M(H)$ has an $(x,y)$-path of length 3, but $H$ has no $(x,y)$-path, (iv) $H$ has only one 4-king in $U.$
\end{prop}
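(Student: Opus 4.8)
The plan is to exhibit one explicit example and verify the four properties directly. With $n=4$ and $k=3$ there are exactly $\binom{4}{3}=4$ triples, each contributing a single ordered arc, and $\binom{n-2}{k-2}=\binom{2}{1}=2$; thus a majority bipartite tournament $M(H)$ is obtained by orienting each cross-pair $uw$ towards the vertex preceding the other in \emph{both} arcs through $u$ and $w$, with the remaining ties broken freely. The target design is: $u_1$ is unable to reach $w_2$ in $H$ at all; $M(H)$ can be chosen to be the directed $4$-cycle on $u_1,w_1,u_2,w_2$ (which has no transmitter but contains a path of length $3$); and $u_2$ is a king of $H$.

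Concretely, put $U=\{u_1,u_2\}$ and $W=\{w_1,w_2\}$, and take the four arcs
\[
u_1w_1u_2,\quad w_2u_1u_2,\quad w_2w_1u_1,\quad u_2w_1w_2.
\]
A routine inspection gives $A_H(u_1,w_1)=A_H(w_1,u_2)=\{u_1w_1u_2\}$, $A_H(u_2,w_1)=A_H(u_2,w_2)=\{u_2w_1w_2\}$, $A_H(u_1,w_2)=\emptyset$, $A_H(w_1,u_1)=\{w_2w_1u_1\}$, and $|A_H(w_2,u_1)|=2$. Hence one admissible $M(H)$ is $u_1\to w_1\to u_2\to w_2\to u_1$: the edge $w_2\to u_1$ is forced by $A_H(u_1,w_2)=\emptyset$, and the other three cross-pairs each split evenly and are resolved so as to close this $4$-cycle. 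This $M(H)$ has no transmitter — every vertex has one in- and one out-neighbour — which gives (i) and (ii), and it contains the length-$3$ path $u_1w_1u_2w_2$, so I take $(x,y)=(u_1,w_2)$.

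It remains to check (iii) and (iv). Any $(u_1,w_2)$-path of $H$ would have odd length; length $1$ is excluded since $A_H(u_1,w_2)=\emptyset$; the only length-$3$ candidate is $u_1w_1u_2w_2$, which would require distinct arcs in $A_H(u_1,w_1)$ and in $A_H(w_1,u_2)$, impossible because both sets equal $\{u_1w_1u_2\}$; and no longer path fits on four vertices. Thus $H$ has no $(u_1,w_2)$-path, proving (iii). For (iv): $u_1$ is not a $4$-king because it cannot reach $w_2$; meanwhile $u_2$ reaches $w_1$ and $w_2$ directly along the arc $u_2w_1w_2$ and reaches $u_1$ by the length-$2$ path through $w_1$ that uses the distinct arcs $u_2w_1w_2$ and $w_2w_1u_1$, so $u_2$ is even a $2$-king; hence $U$ contains exactly one $4$-king. (As $M(H)$ has no transmitter, $H$ has none either, so this $H$ is also a counterexample to Conjecture~\ref{c2}.)

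The only genuine difficulty is finding the configuration rather than checking it: property (ii) forces several edges of $M(H)$, hence the corresponding $A_H$-sets, to be nonempty, whereas property (iii) forces $A_H(u_1,w_1)$ and $A_H(w_1,u_2)$ to be small enough to coincide; these opposing demands must be reconciled without creating a transmitter in $M(H)$ or disconnecting $u_2$ from some vertex. Once the four arcs above are fixed, everything else is a finite, mechanical check.
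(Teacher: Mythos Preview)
Your proof is correct and follows essentially the same approach as the paper: exhibit an explicit bipartite $3$-hypertournament on four vertices, compute the sets $A_H(\cdot,\cdot)$, choose $M(H)$ to be a directed $4$-cycle (one edge forced, three ties broken), and verify that the obvious length-$3$ path in $M(H)$ cannot be realised in $H$ because the first two steps are forced to use the same arc. Your concrete example differs from the paper's (the paper takes arcs $x_4x_1x_2,\ x_2x_3x_4,\ x_3x_2x_1,\ x_4x_3x_1$ with $U=\{x_1,x_3\}$, $W=\{x_2,x_4\}$), but the structure and the verification are the same.
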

\begin{proof}
Let $H$ be a bipartite hypertournament with partite sets $U=\{x_1,x_3\}$ and $W=\{x_2,x_4\}$, arc set $\{a_1,a_2,a_3,a_4\}$ where $$a_1=x_{4}x_{1}x_{2}, a_2=x_{2}x_{3}x_{4}, a_3=x_{3}x_{2}x_{1}, a_4=x_{4}x_{3}x_{1}.$$ Let the arcs of $M(H)$
be $x_4x_1,x_1x_2,x_{2}x_{3},x_{3}x_{4}$  (see Fig. \ref{fig:M(H)}). Clearly, (i) and (ii) hold and
$x_{1}x_{2}x_{3}x_{4}$ is an $(x_{1},x_{4})$-path in $M(H)$.

\begin{figure}
\begin{center}
\begin{tikzpicture}
\fill (-1,1) circle (0.05);
\fill (1,1) circle (0.05);
\fill (-1,-1) circle (0.05);
\fill (1,-1) circle (0.05);
\draw [-latex] (-1,1)--(1,1);
\draw [-latex] (1,1)--(-1,-1);
\draw [-latex] (-1,-1)--(1,-1);
\draw [-latex] (1,-1)--(-1,1);
\node[left] at (-1,1)  {$x_1$};
\node[right] at (1,1)  {$x_2$};
\node[left] at (-1,-1)  {$x_3$};
\node[right] at (1,-1)  {$x_4$};
\end{tikzpicture}
\end{center}\caption{$M(H)$}\label{fig:M(H)}
\end{figure}

Now consider $H.$ Suppose that $H$ has an $(x_{1},x_{4})$-path $P.$ Since $A_B(x_1,x_4)=\emptyset$, $P=x_1b_1x_2b_2x_3b_3x_4$
for some distinct arcs  $b_1,b_2,b_3$ of $H.$  By inspection of the arcs of $H$, we conclude that $b_1=a_1, b_2=a_2, b_3=a_2,$ which is impossible since $b_1,b_2,b_3$ must be distinct. So $H$ has no $(x_{1},x_{4})$-path and (iii) holds.
Observe that $x_3$ is a 4-king of $H$ since $x_3a_3x_2,$ $x_3a_2x_4$ and $x_{3}a_{2}x_{4}a_{1}x_{1}$ is an $(x_{3,}x_{1})$-path of length 2. Moreover, $x_1$ cannot be a 4-king by the discussion in (iii), so (iv) holds.\end{proof}

\end{document}